\newtheorem{theorem}{Theorem}[section]
\newtheorem{corollary}[theorem]{Corollary}
\theoremstyle{definition}
\newtheorem{definition}[theorem]{Definition}
\newtheorem{example}[theorem]{Example}
\theoremstyle{remark}
\numberwithin{equation}{section}
\def\DJ{\leavevmode\setbox0=\hbox{D}\kern0pt\rlap
 {\kern.04em\raise.188\ht0\hbox{-}}D}
\begin{document}

\title[Common solution \dots using fixed point results]{Common solution to a pair of non-linear matrix equations via fixed point results}
\author[H.\ Garai, L.K. \ Dey,]
{Hiranmoy Garai$^{1}$,  Lakshmi Kanta Dey$^{2}$,}

\address{{$^{1}$\,} Hiranmoy Garai,
                    Department of Mathematics,
                    National Institute of Technology
                    Durgapur, India.}
                    \email{hiran.garai24@gmail.com}
\address{{$^{2}$\,} Lakshmi Kanta Dey,
                    Department of Mathematics,
                    National Institute of Technology
                    Durgapur, India.}
                    \email{lakshmikdey@yahoo.co.in}
%

\subjclass{$47H10$, $54H25$.}
\keywords{Altering distance function, fixed point, Banach space, double sequence, thermostat model.}

\begin{abstract}
In this article, we propose an idea to develop some sufficient conditions for the existence and uniqueness of a positive definite common solution to a pair of non-linear matrix equations. To proceed this, we present some interesting common fixed point results involving couple of altering distance functions along with some other control functions in Banach spaces. Based on these results, we deduce some desired sufficient conditions for the existence and uniqueness  of a positive definite common solution to the said pair of non-linear matrix equations. We  point out a probable applicable area of our findings.
\end{abstract}
 
\maketitle

\setcounter{page}{1}

\centerline{}

\centerline{}

\section{\bf Introduction and Preliminaries}
Matrix equations are broadly used in many scientific areas such as  physics, engineering, computer sciences,  information sciences,  system and control theory etc. The research on solvability conditions and general solutions of different types of linear and non-linear matrix equations is  one of the central importances in matrix theory and its applications. One of the main focus in this topic is to provide solvability conditions, special and general solutions and structural representations of some important matrix equations (linear or non-linear).   One such   non-linear matrix equation is of  the following form
\begin{equation}\label{me1}
X=Q + \displaystyle \sum_{i=1}^{m} {A_i}^*F(X)A_i
\end{equation}
where $Q$ is a Hermitian positive definite $n \times n$ matrix and  $A_i$'s are arbitrary $n \times n$ matrices, $F$ is a map from the set of all  $n \times n$ Hermitian matrices into itself.  This type of matrix equation frequently occurs in  mathematical modeling of many real life problems including control theory, ladder networks, dynamic programming, stochastic filtering, image processing, computer graphics, medical imaging solution methods etc. [see  \cite{W1,TW} and references therein]. In this context,   finding some sufficient conditions to ensure the existence and uniqueness of a positive definite solution $X$ of Equation \eqref{me1} plays a crucial role.  Some  more related works  about matrix equations are discussed in \cite {RR,ER,BS} and the references cited therein. 

It is interesting to note that in the context of non-interacting control theory with or without internal stability, a special case may arise that a composite non-linear system having two different  kinds of control input and measurement output needs a solution which will satisfy both the inputs and outputs. Such type of problems may be handled by finding a common solution to some family of non-linear matrix equations. So far there are no results associated with the existence of common solution(s) to the above type of equations.  Motivated by these facts our main aim of this paper is to study the common solutions of the following pair of matrix equations:
\begin{align}
X=Q_1 + \displaystyle \sum_{i=1}^{m} {A_i}^*F(X)A_i \label{me2}\\
X=Q_2 + \displaystyle \sum_{i=1}^{m} {A_i}^*G(X)A_i \label{me3}
\end{align}
where $Q_1,Q_2$ are Hermitian positive definite $n \times n$ matrices and  $A_i$'s are arbitrary $n \times n$ matrices, and $F,G$ are maps from the set of all  $n \times n$ Hermitian matrices into itself. Also they are continuous functions with respect to some suitable norm on the set of all Hermitian matrices. 

In order to tackle our main problem we derive a beautiful correlation between the existence of common solution to the pair of non-linear matrix equations \eqref {me2} and  \eqref {me3} and the existence of common fixed point of a pair of suitable Banach space valued self-mappings. Consequently we establish some interesting common fixed point results for a pair of self-mappings on a Banach space by using a control function, known as altering distance function. On the basis of our common fixed point result we deduce some sufficient conditions for existence and uniqueness of a common positive definite solution of the above mentioned matrix equations.


Throughout this article we consider matrices over \textit{complex numbers} and use the notations $M(n),H(n),P(n),\overline{P(n)}$ as the set of all $n\times n$ matrices, $n\times n$ Hermitian matrices, $n\times n$ Hermitian positive definite matrices and  $n\times n$  Hermitian positive semi-definite matrices respectively. 

\section{\bf The Common Fixed Point Results}
In the beginning of this section we prove some common fixed point results involving a pair of altering distance functions. The concept of altering distance function was first initiated by Khan et al. \cite{KSS} and they define such kind of control functions as follows:
\begin{definition} \cite{KSS}
A function $\varphi:[0,\infty)\to [0,\infty)$ is called an altering distance function if it satisfies the following conditions:
\begin{enumerate}
\item[$(i)$] 
$\varphi$ is monotone increasing and continuous;
\item[$(ii)$] 
$\varphi(t)=0$ if and only if $t=0$.
\end{enumerate}
\end{definition}
Since the last few years fixed point theory via altering distance functions is one of the foremost subject in the literature of fixed point theory. Many mathematicians have proved several fixed point results utilizing the notion of this type of control function which extend, unify and generalize several well known results related to fixed points, see \cite {R1,DC,PM,N1,SB}. 

We now prove a common fixed point result concerning a couple of altering distance functions and one more continuous control function.
\begin{theorem} \label{t1}
Let $X$ be a Banach space and $C$ be a closed subset of $X$. Let $f,g :C \to C$ be two continuous functions such that
\begin{equation} \label{e1}
\phi(\|f(x)-g(y)\|)\leq \psi(\|x - f(x)\|,\|y-g(y)\|) - \phi_1(\|x-y\|)
\end{equation}
for all $x,y\in C$, where
\begin{enumerate}
\item[$i)$] $\phi$, $\phi_1$ are two altering distance functions;
\item[$ii$)] $\psi:[0,\infty)\times [0,\infty) \to [0,\infty)$ is a function such that $\psi$ is continuous at $(0,0)$, $\psi(0,0)=0$ and $\psi(t_1,t_2) < \phi(t_1)~~ \mbox{or}~~ \phi(t_2)$ if $t_1 >0$ or $t_2 >0$ or both.
\end{enumerate}
Then $f$ and $g$ have a unique common fixed point.
\end{theorem}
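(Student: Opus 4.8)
The plan is to run a Picard-type iteration that alternates between $f$ and $g$, the standard device for mixed $(\phi,\psi)$-contractive conditions. Fix an arbitrary $x_0\in C$ and define $x_{2n+1}=f(x_{2n})$ and $x_{2n+2}=g(x_{2n+1})$ for $n\ge 0$; since $f,g$ map $C$ into itself, the whole orbit stays in $C$. Put $d_n=\|x_n-x_{n+1}\|$. If $d_n=0$ for some $n$, then substituting $x=y=x_n$ into \eqref{e1} together with the hypothesis on $\psi$ forces $x_n$ to be a common fixed point (in either parity of $n$), so we may assume $d_n>0$ for every $n$.

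First I would prove that $d_n$ decreases to $0$. Applying \eqref{e1} with $(x,y)=(x_{2n},x_{2n+1})$ gives $\phi(d_{2n+1})\le\psi(d_{2n},d_{2n+1})-\phi_1(d_{2n})$, and condition $(ii)$ (read as $\psi(t_1,t_2)<\max\{\phi(t_1),\phi(t_2)\}$ whenever $(t_1,t_2)\ne(0,0)$) forces $\phi(d_{2n+1})<\phi(d_{2n})$, hence $d_{2n+1}\le d_{2n}$ since $\phi$ is monotone; the symmetric choice $(x,y)=(x_{2n+2},x_{2n+1})$ yields $d_{2n+2}\le d_{2n+1}$. Thus $\{d_n\}$ is nonincreasing, so $d_n\to d\ge 0$. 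If $d>0$, then letting $n\to\infty$ in $\phi(d_{2n+1})<\phi(d_{2n})-\phi_1(d_{2n})$ and using continuity of $\phi$ and $\phi_1$ gives $\phi(d)\le\phi(d)-\phi_1(d)$, whence $\phi_1(d)=0$ and $d=0$, a contradiction. Therefore $d_n\to 0$.

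The heart of the argument is showing $\{x_n\}$ is Cauchy. I would argue by contradiction in the classical manner: if not, there exist $\varepsilon>0$ and indices $m_k>n_k$ of prescribed parity — say both $m_k$ and $n_k$ even — with $\|x_{n_k}-x_{m_k}\|\ge\varepsilon$ and $\|x_{n_k}-x_{m_k-2}\|<\varepsilon$. Combining the triangle inequality with $d_n\to 0$ shows that $\|x_{n_k}-x_{m_k}\|\to\varepsilon$ and, likewise, $\|x_{n_k}-x_{m_k-1}\|\to\varepsilon$ and $\|x_{n_k+1}-x_{m_k}\|\to\varepsilon$. Now apply \eqref{e1} with $(x,y)=(x_{n_k},x_{m_k-1})$, which is legitimate because $n_k$ is even and $m_k-1$ is odd, to get $\phi(\|x_{n_k+1}-x_{m_k}\|)\le\psi(d_{n_k},d_{m_k-1})-\phi_1(\|x_{n_k}-x_{m_k-1}\|)$. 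Since $\psi$ is continuous at $(0,0)$ with $\psi(0,0)=0$ and $d_{n_k},d_{m_k-1}\to 0$, passing to the limit gives $\phi(\varepsilon)\le-\phi_1(\varepsilon)\le 0$, so $\varepsilon=0$, a contradiction. Hence $\{x_n\}$ is Cauchy; as $X$ is complete and $C$ is closed, $x_n\to x^*\in C$. Continuity of $f$ and $g$ then gives $f(x^*)=\lim x_{2n+1}=x^*$ and $g(x^*)=\lim x_{2n+2}=x^*$, so $x^*$ is a common fixed point. For uniqueness, if $x^*$ and $y^*$ are both common fixed points, \eqref{e1} with $(x,y)=(x^*,y^*)$ gives $\phi(\|x^*-y^*\|)\le\psi(0,0)-\phi_1(\|x^*-y^*\|)\le 0$, forcing $x^*=y^*$.

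I expect the Cauchy step to be the only genuine difficulty — not conceptually, but in the bookkeeping: one must choose the parities of $m_k$ and $n_k$ so that in \eqref{e1} one slot is genuinely an $f$-image and the other a $g$-image, and one must verify carefully that all the auxiliary distances $\|x_{n_k}-x_{m_k}\|$, $\|x_{n_k}-x_{m_k-1}\|$, $\|x_{n_k+1}-x_{m_k}\|$ converge to the same $\varepsilon$. The alternating iteration is indispensable precisely because $\psi$ is assumed continuous only at $(0,0)$: limits can be taken inside $\psi$ only along sequences tending to the origin, and it is the step $d_n\to 0$ that supplies them.
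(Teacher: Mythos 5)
Your proposal is correct, and its overall skeleton coincides with the paper's: the same alternating iteration $x_{2k+1}=f(x_{2k})$, $x_{2k+2}=g(x_{2k+1})$, the same argument that $s_n=\|x_n-x_{n+1}\|$ is nonincreasing (ruling out the branch $\psi<\phi$ of the ``new'' distance because it would give $\phi(s_{n+1})<\phi(s_{n+1})$), the same passage to the limit showing $s_n\to 0$, and the same endgame via closedness of $C$, continuity of $f,g$, and the uniqueness computation $\phi(\|z-z_1\|)+\phi_1(\|z-z_1\|)\le 0$. The one place where you genuinely diverge is the Cauchy step. You run the classical weak-contraction device: assume not Cauchy, extract parity-adjusted indices $n_k<m_k$ with $\|x_{n_k}-x_{m_k}\|\ge\varepsilon$ and a minimality condition, prove the auxiliary limits $\|x_{n_k}-x_{m_k-1}\|\to\varepsilon$, $\|x_{n_k+1}-x_{m_k}\|\to\varepsilon$, and then let $k\to\infty$ in the contractive inequality to get $\phi(\varepsilon)+\phi_1(\varepsilon)\le 0$. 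The paper instead argues directly: given $\epsilon>0$, continuity of $\psi$ at $(0,0)$ with $\psi(0,0)=0<\phi(\epsilon/2)$ gives $\psi(s_n,s_m)<\phi(\epsilon/2)$ once $s_n,s_m$ are small, so the inequality $\phi(\|x_{n+1}-x_{m+1}\|)\le\psi(s_n,s_m)$ together with mere monotonicity of $\phi$ yields $\|x_{n+1}-x_{m+1}\|<\epsilon/2$ for all large opposite-parity $n,m$, and same-parity pairs follow by one triangle inequality. Your route is the standard one and is sound (the parity bookkeeping you flag does go through, at the cost of shrinking $\varepsilon$ when shifting indices), but it needs the subsequence extraction and continuity of $\phi$, $\phi_1$ at $\varepsilon$; the paper's direct estimate avoids all of that and is shorter. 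Your extra preliminary reduction (if some $s_n=0$ then $x_n$ is already a common fixed point) is correct and harmless, though the paper does not need it.
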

\begin{proof}
Let $x_0 \in C$ be arbitrary but fixed. We construct the sequence $\{x_n\}$ where $x_{2k}= g(x_{2k-1})$ for all $ k\in \mathbb{N}$ and $x_{2k+1}= f(x_{2k})$ for all $k\in \mathbb{N} \cup \{0\}$. Then we  consider the sequence $\{s_n\}$ of real numbers, where $s_n =\|x_n-x_{n+1}\|$ for all $n\in \mathbb{N}$. 

First we assume that $n$ is odd. Then putting $x=x_{n+1}$, $y=x_n$ in Equation \eqref{e1} we get
\begin{align}
\phi(\|f(x_{n+1})-g(x_n)\|) &\leq \psi(\|x_{n+1} - f(x_{n+1})\|,\|x_n-g(x_n)\|) - \phi_1(\|x_{n+1}-x_n\|)\nonumber\\
  \label{e2} \Rightarrow \phi(\|x_{n+2}-x_{n+1}\|) &\leq \psi(\|x_{n+1} - x_{n+2}\|,\|x_n-x_{n+1}\|) - \phi_1(\|x_{n+1}-x_n\|).
\end{align}
If $\|x_{n+1} - x_{n+2}\|=\|x_n-x_{n+1}\|=0$, then we have $s_{n+1}=s_n$, i.e., $s_{n+1}\leq s_n$  otherwise we have $$\psi(\|x_{n+1} - x_{n+2}\|,\|x_n-x_{n+1}\|) < \phi(\|x_{n+1} - x_{n+2}\|)~\mbox{or}~\phi(\|x_n-x_{n+1}\|).$$
If  $\psi(\|x_{n+1} - x_{n+2}\|,\|x_n-x_{n+1}\|) < \phi(\|x_{n+1} - x_{n+2}\|)$, then from Equation \eqref{e2} we get $$\phi(\|x_{n+2}-x_{n+1}\|) < \phi(\|x_{n+1} - x_{n+2}\|) - \phi_1(\|x_{n+1}-x_n\|),$$ which is not possible. So we must have,  $$\psi(\|x_{n+1} - x_{n+2}\|,\|x_n-x_{n+1}\|) < \phi(\|x_n - x_{n+1}\|).$$ Then from Equation \eqref{e2} we have 
\begin{align*}
\phi(\|x_{n+2}-x_{n+1}\|) &<  \phi(\|x_n - x_{n+1}\|) - \phi_1(\|x_{n+1}-x_n\|)\\
\Rightarrow \|x_{n+2}-x_{n+1}\| &< \|x_n - x_{n+1}\|\\
\Rightarrow s_{n+1} &< s_n, ~\mbox{i.e.},~ s_{n+1} \leq s_n.
\end{align*}
Again if $n$ be even, then putting $x=x_n$ and $y=x_{n+1}$ in equation \eqref{e1} we get
\begin{align}
\phi(\|f(x_n)-g(x_{n+1})\|) &\leq \psi(\|x_n - f(x_n)\|,\|x_{n+1}-g(x_{n+1})\|) - \phi_1(\|x_n-x_{n+1}\|)\nonumber\\
  \label{e3} \Rightarrow \phi(\|x_{n+1}-x_{n+2}\|) &\leq \psi(\|x_n - x_{n+1}\|,\|x_{n+1}-x_{n+2}\|) - \phi_1(\|x_{n+1}-x_n\|).
\end{align}
Now using he property that $\psi(0,0)=0$ and  $\psi(t_1,t_2) < \phi(t_1)~~ \mbox{or}~~ \phi(t_2)$ if $t_1>0$ or $t_2>0$ or both and proceeding as in the previous case, we can obtain from Equation \eqref{e3} that $s_{n+1} \leq s_n.$

Thus we have $s_{n+1} \leq s_n$ for all $n \in \mathbb{N}$. Therefore, $\{s_n\}$ is a monotone decreasing sequence of non-negative real numbers and hence convergent to some $a\in \mathbb{R}$, say.

 Next, we show that $a=0$.
For any even $n\in \mathbb{N}$, putting $x=x_n$ and $y=x_{n+1}$ in Equation \eqref{e1} we get
\begin{equation} \label{e4}
\phi(\|x_{n+1}-x_{n+2}\|) \leq \psi (\|x_n - x_{n+1}\|, \|x_{n+1}-x_{n+2}\|)-\phi_1(\|x_n - x_{n+1}\|).
\end{equation}

If $\|x_n - x_{n+1}\|=\|x_{n+1}-x_{n+2}\|=0$, then we have $\psi (\|x_n - x_{n+1}\|, \|x_{n+1}-x_{n+2}\|)=0$ and so from Equation \eqref{e4} we get
$$\phi(\|x_{n+1}-x_{n+2}\|)+\phi_1(\|x_n - x_{n+1}\|)\leq 0.$$
Letting $n \to \infty$ in both sides of above equation we get $$\phi(a) + \phi_1(a) \leq 0$$ and this gives $a=0$.

If $\|x_n - x_{n+1}\| \neq 0$ or $\|x_{n+1}-x_{n+2}\| \neq 0$ or both, then $$\psi (\|x_n - x_{n+1}\|, \|x_{n+1}-x_{n+2}\|) < \phi(\|x_n - x_{n+1}\|)~\mbox{or}~ \phi(\|x_{n+1}-x_{n+2}\|).$$

If $\psi (\|x_n - x_{n+1}\|, \|x_{n+1}-x_{n+2}\|) < \phi(\|x_{n+1}-x_{n+2}\|)$, then from Equation \eqref{e4} we get 
$$\phi(\|x_{n+1}-x_{n+2}\|) < \phi(\|x_{n+1}-x_{n+2}\|)-\phi_1(\|x_n - x_{n+1}\|),$$ which is not possible, so we must have $$\psi (\|x_n - x_{n+1}\|, \|x_{n+1}-x_{n+2}\|) < \phi(\|x_n-x_{n+1}\|).$$
Then from Equation \eqref{e4} we have 
\begin{equation} \label{e5}
\phi(\|x_{n+1}-x_{n+2}\|) < \phi(\|x_n-x_{n+1}\|)-\phi_1(\|x_n - x_{n+1}\|).
\end{equation}
Now, letting $n \to \infty$ in both sides of Equation \eqref{e5} and using the continuity of $\phi$, $\phi_1$ we get 
\begin{align*}
\phi(a)& \leq \phi(a) - \phi_1(a)\\
\Rightarrow \phi_1(a)&=0 \Rightarrow a =0.
\end{align*}
Therefore, $$\displaystyle \lim_{n\to \infty}s_n=0.$$
%
Next, we show that $\{x_n\}$ is a Cauchy sequence. Let $n,m \in \mathbb{N}$ be arbitrary.
Then for even $n$ and odd $m$, putting $x=x_n$  and $y=x_m$ in Equation \eqref{e1} we get
\begin{align}
\phi(\|f(x_n)-g(x_m)\|) &\leq \psi(\|x_n -f(x_n)\|,\|x_m-g(x_m)\|) - \phi_1(\|x_n - x_m\|) \nonumber\\
\Rightarrow \phi(\|x_{n+1}-x_{m+1}\|) &\leq \psi(\|x_n -x_{n+1}\|,\|x_m-x_{m+1}\|) - \phi_1(\|x_n - x_m\|) \nonumber\\
\Rightarrow \phi(\|x_{n+1}-x_{m+1}\|) &\leq \psi(\|x_n -x_{n+1}\|,\|x_m-x_{m+1}\|) \label{e6}
\end{align}
Since $\psi$ is continuous at $(0,0)$, so for any $\epsilon >0$, there is a $\delta>0$ such that $$\psi(\|x_n -x_{n+1})\|,\|x_m-x_{m+1}\|)<\phi\left(\frac{\epsilon}{2}\right)$$ whenever $\|x_n -x_{n+1}\|^2 + \|x_m-x_{m+1}\|^2 < \delta^2$.

Again since, $\displaystyle \lim_{n\to \infty}\|x_n-x_{n+1}\|=0=\displaystyle \lim_{m\to \infty}\|x_m-x_{m+1}\|=0$, for the above $\delta>0$, there is a natural number $N_1$ such that
$$\|x_n-x_{n+1}\| < \frac{\delta}{\sqrt{2}}~~,~~\|x_m-x_{m+1}\| < \frac{\delta}{\sqrt{2}}$$ for all $n,~m \geq N_1.$ Therefore, for all $n,~m \geq N_1$ we have
\begin{align*}
\|x_n-x_{n+1}\| ^2 + \|x_m-x_{m+1}\|^2 &< \frac{\delta^2}{2} + \frac{\delta^2}{2} =\delta^2 \\
\Rightarrow \psi(\|x_n -x_{n+1}\|,\|x_m-x_{m+1}\|)&<\phi\left(\frac{\epsilon}{2}\right)\\
\Rightarrow \phi(\|x_{n+1}-x_{m+1}\|)&< \phi\left(\frac{\epsilon}{2}\right)~~ \Big[\mbox{using Equation \ref{e6}}\Big]\\
\Rightarrow \|x_{n+1}-x_{m+1}\| &< \frac{\epsilon}{2}
\end{align*}
Thus, $\|x_{n}-x_{m}\| < \frac{\epsilon}{2}$ for all $n,~m~ \geq N_1 +1$. Similarly if $n$ is odd and $m$ is even, we get a natural number $N_2$ such that $$\|x_{n}-x_{m}\| < \frac{\epsilon}{2}$$ for all $n,~m~ \geq N_2 +1.$ 
Choose $N= \max \{N_1 + 1, N_2 + 1\}$. Therefore, $$\|x_n-x_m\|< \frac{\epsilon}{2}$$ for all $n,~m \geq N$ with any one of $n,~m$ is even and the other is odd. 

If $n,~m$  both are even, then we have 
\begin{align*}
\|x_{n}-x_{m}\| &\leq  \|x_{n}-x_{n+1}\| + \|x_{n+1}-x_{m}\|\\
&< \frac{\epsilon}{2} + \frac{\epsilon}{2}=\epsilon
\end{align*}
for all $n,~m \geq N$. Again if $n,~m$  both are odd, then we can similarly show that $$\|x_{n}-x_{m}\|< \epsilon$$ for all $n,~m \geq N$. Hence $$\|x_{n}-x_{m}\|< \epsilon$$ for all $n,~m \geq N$. Thus $\{x_n\}$ is a Cauchy sequence in the closed subset $C$ of the Banach space $X$ and so convergent to some $z\in C$. Therefore the two subsequences $\{x_{2n}\}$ and $\{x_{2n-1}\}$ converge to $z$. Then by the continuity of $f$ and $g$ we have $$f(x_{2n}) \to f(z)~~\mbox{and}~~ g(x_{2n+1}) \to g(z)~~\mbox{as}~~ n \to \infty,$$i.e., $$x_{2n+1} \to f(z)~~\mbox{and}~~ x_{2n+2} \to g(z)~~\mbox{as}~~ n \to \infty.$$ So by the uniqueness of limit of the sequence $\{x_n\}$, we have $f(z)=z=g(z)$, i.e., $z$ is a common fixed point of $f$ and $g$.

Finally, we check the uniqueness of the common fixed point $z$. To check this, let $z_1(z_1\neq z)$ be another common fixed point of $f$ and $g$. Then from Equation \eqref{e1} we have
\begin{align*}
&\phi(\|f(z) - g(z_1)\|) \leq \psi(\|z - f(z)\|, \|z_1 - g(z_1)\|) - \phi_1(\|z-z_1\|)\\
&\Rightarrow \phi(\|f(z) - g(z_1)\|) \leq \psi(0,0) - \phi_1(\|z-z_1\|)\\
&\Rightarrow \phi(\|f(z) - g(z_1)\|) + \phi_1(\|z-z_1\|) \leq 0,
\end{align*}
which leads to a contradiction and so $z$ is the unique common fixed point of $f$ and $g$.
\end{proof}
In the next theorem of this section we relax the continuity of $f$ and $g$ in Theorem \ref{t1} to check the existence of common fixed point of $f$ and $g$. To give the guarantee of common fixed point of $f$ and $g$, we  prove the following theorem by  making a slide change on the control function $\psi$.
\begin{theorem} \label{t2}
Let $X$ be a Banach space and $C$ be a closed subset of $X$. Let $f,g :C \to C$ be two functions such that
\begin{equation} \label{e7}
\phi(\|f(x)-g(y)\|)\leq \psi(\|x - f(x)\|,\|y-g(y)\|) - \phi_1(\|x-y\|)
\end{equation}
for all $x,y\in C$, where
\begin{enumerate}
\item[$i)$] $\phi$, $\phi_1$ are two altering distance functions;
\item[$ii)$] $\psi:[0,\infty)\times [0,\infty) \to [0,\infty)$ is a continuous function such that $\psi(t_1,t_2) \leq \phi(t_1)~~ \mbox{and}~~ \phi(t_1)$ for all $t_1,t_2 \in [0,\infty)$.
\end{enumerate}
Then $f$ and $g$ has a unique common fixed point.
\end{theorem}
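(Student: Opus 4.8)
The plan is to keep the Picard-type iteration and the step-distance analysis from the proof of Theorem \ref{t1}, and to replace the single place where the continuity of $f$ and $g$ was used by an argument exploiting the \emph{full} continuity of $\psi$ together with the stronger domination $\psi(t_1,t_2)\le\phi(t_1)$ and $\psi(t_1,t_2)\le\phi(t_2)$ for all $t_1,t_2\ge 0$. Observe first that this hypothesis yields $\psi(t,0)\le\phi(0)=0$ and $\psi(0,t)\le\phi(0)=0$, hence $\psi(0,0)=0$ and $\psi$ is in particular continuous at $(0,0)$ — which is all that was needed for the Cauchy step in Theorem \ref{t1}.

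I would fix $x_0\in C$, set $x_{2k}=g(x_{2k-1})$, $x_{2k+1}=f(x_{2k})$, and $s_n=\|x_n-x_{n+1}\|$. Substituting consecutive iterates into \eqref{e7} exactly as in Theorem \ref{t1} (taking $x=x_n,\ y=x_{n+1}$ for $n$ even, $x=x_{n+1},\ y=x_n$ for $n$ odd) and using $\psi(s_n,s_{n+1})\le\phi(s_n)$, respectively $\psi(s_{n+1},s_n)\le\phi(s_n)$, gives $\phi(s_{n+1})\le\phi(s_n)-\phi_1(s_n)\le\phi(s_n)$; since $\phi$ is increasing and $\phi_1$ vanishes only at $0$, this forces $s_{n+1}\le s_n$ (the case $s_n=0$ being trivial), so $\{s_n\}$ decreases to some $a\ge 0$, and letting $n\to\infty$ in $\phi(s_{n+1})\le\phi(s_n)-\phi_1(s_n)$ with $\phi,\phi_1$ continuous gives $\phi_1(a)=0$, i.e.\ $a=0$. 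The Cauchy argument is then verbatim that of Theorem \ref{t1}: for $n$ even and $m$ odd one obtains $\phi(\|x_{n+1}-x_{m+1}\|)\le\psi(s_n,s_m)$, and the continuity of $\psi$ at $(0,0)$ together with $s_n\to 0$ yields $\psi(s_n,s_m)<\phi(\epsilon/2)$ for $n,m$ large; monotonicity of $\phi$ and the triangle inequality produce a Cauchy sequence, whose limit $z$ lies in the closed set $C$.

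The crux is to show $f(z)=z=g(z)$ without continuity of $f,g$. For $f$, I would put $x=z$, $y=x_{2n-1}$ in \eqref{e7}, obtaining
\begin{equation*}
\phi\bigl(\|f(z)-x_{2n}\|\bigr)\le\psi\bigl(\|z-f(z)\|,\,s_{2n-1}\bigr)-\phi_1\bigl(\|z-x_{2n-1}\|\bigr),
\end{equation*}
and let $n\to\infty$: since $x_{2n}\to z$, $s_{2n-1}\to 0$, $\|z-x_{2n-1}\|\to 0$, and $\phi,\psi,\phi_1$ are continuous, this passes to $\phi(\|f(z)-z\|)\le\psi(\|z-f(z)\|,0)\le\phi(0)=0$, whence $f(z)=z$. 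Symmetrically, putting $x=x_{2n}$, $y=z$ and letting $n\to\infty$ gives $\phi(\|z-g(z)\|)\le\psi(0,\|z-g(z)\|)\le\phi(0)=0$, so $g(z)=z$; thus $z$ is a common fixed point. Uniqueness is exactly as before: if $z_1$ is another common fixed point, taking $x=z$, $y=z_1$ in \eqref{e7} yields $\phi(\|z-z_1\|)+\phi_1(\|z-z_1\|)\le\psi(0,0)=0$, forcing $z_1=z$.

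The only genuinely new step is the limiting argument showing $z$ is fixed, and the delicate point is that passing to the limit in \eqref{e7} is legitimate precisely because $\psi$ is continuous on all of $[0,\infty)\times[0,\infty)$ — not merely at the origin — while the vanishing $\psi(\cdot,0)=\psi(0,\cdot)=0$ is what collapses the right-hand side to $0$ and lets us dispense with continuity of $f$ and $g$.
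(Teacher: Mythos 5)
Your proposal is correct and follows essentially the same route as the paper: the same alternating iteration and Cauchy argument carried over from Theorem \ref{t1}, the same passage to the limit in \eqref{e7} using the full continuity of $\psi$ together with $\psi(0,t)\le\phi(0)=0$ and $\psi(t,0)\le\phi(0)=0$ to conclude $f(z)=z=g(z)$, and the same uniqueness argument. If anything, you are a bit more careful than the paper, which merely says ``continuing as in Theorem \ref{t1}'': you re-derive $s_{n+1}\le s_n$ and the Cauchy property under the modified hypothesis on $\psi$ and verify both $f(z)=z$ and $g(z)=z$ explicitly.
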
 
\begin{proof}
Let $x_0 \in C$ be arbitrary and consider the sequence $\{x_n\}$ where $x_{2k}= g(x_{2k-1})$ $k\in \mathbb{N}$
and $x_{2k+1}= f(x_{2k})$ for all $k\in \mathbb{N} \cup \{0\}$. Then continuing as in Theorem \ref{t1}, we can show that $\{x_n\}$ is Cauchy sequence and hence convergent to some $z\in C$. Therefore the two subsequences $\{x_{2n}\}$ and $\{x_{2n+1}\}$  of $\{x_n\}$ also converge to $z$. Finally putting $x=x_{2n} $ and $y=z$ in Equation \ref{e7} we get
\begin{align}
\phi(\|f(x_{2n})-g(z)\|) &\leq \psi(\|x_{2n} - f(x_{2n})\|,\|z-g(z)\|) - \phi_1(\|x_{2n}-z\|)\nonumber\\
\Rightarrow \phi(\|x_{2n+1}-g(z)\|) &\leq \psi(\|x_{2n} - x_{2n+1}\|\|z-g(z)\|) - \phi_1(\|x_{2n}-z\|) \label{e8}.
\end{align} 
Letting $n \to \infty$ in both sides of Equation \ref{e8} and using the continuity of $\phi,~\psi,~\phi_1$ we get
\begin{align*}
\phi(\|z-g(z)\|) & \leq \psi(0,\|z-g(z)\|) - \phi_1(0)\\
\Rightarrow \phi(\|z-g(z)\|)  & \leq \phi(0)\\
\Rightarrow g(z)=z.
\end{align*}
In a similar manner, we can show that $f(z)=z$. Thus, $z$ is a common fixed point of $f$ and $g$. The uniqueness of the common fixed point $z$ being similar to that of Theorem \ref{t1}, is omitted.
\end{proof}
From Theorem \ref{t1} we have the following corollaries:
\begin{corollary}
Let $X$ be a Banach space and $C$ be a closed subset of $X$. Let $f,g :C \to C$ be two continuous functions such that
$$\phi(\|f(x)-g(y)\|)\leq \phi\left(\max\{\alpha\|x - f(x)\|,\alpha\|y-g(y)\|\}\right) - \phi_1(\|x-y\|)$$
for all $x,y\in C$ with $x\neq y$, where $\phi$, $\phi_1$ are two altering distance functions and $\alpha$ is a real number with $\alpha<1$.
Then $f$ and $g$ has a unique common fixed point.
\end{corollary}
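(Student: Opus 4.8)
The plan is to read the corollary as the special case of Theorem~\ref{t1} obtained by taking the control function
\[
\psi(t_1,t_2):=\phi\bigl(\max\{\alpha t_1,\alpha t_2\}\bigr)=\phi\bigl(\alpha\max\{t_1,t_2\}\bigr),
\]
where we use $0\le\alpha<1$ so that the argument of $\phi$ stays in $[0,\infty)$ (for a negative constant one replaces $\alpha$ by $|\alpha|$ throughout, or reads the maximum accordingly). With this choice the hypothesis
\[
\phi(\|f(x)-g(y)\|)\le\phi\bigl(\max\{\alpha\|x-f(x)\|,\alpha\|y-g(y)\|\}\bigr)-\phi_1(\|x-y\|)
\]
is literally inequality~\eqref{e1}, and the requirement that $\phi,\phi_1$ be altering distance functions is inherited unchanged. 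Hence only the two conditions imposed on $\psi$ in part~(ii) of Theorem~\ref{t1} remain to be checked, after which the theorem applies directly.

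First, $\psi$ is continuous on all of $[0,\infty)\times[0,\infty)$, being the composition of the continuous map $(t_1,t_2)\mapsto\alpha\max\{t_1,t_2\}$ with the continuous function $\phi$; in particular it is continuous at $(0,0)$, and $\psi(0,0)=\phi(0)=0$ since $\phi$ is an altering distance function. For the strict domination, assume $t_1>0$ or $t_2>0$ (or both) and set $t:=\max\{t_1,t_2\}>0$. As $0\le\alpha<1$ we have $\alpha t<t$; using that $\phi$ is (strictly) increasing, together with $\phi(0)=0<\phi(s)$ for $s>0$ to cover the case $\alpha=0$, and the fact that for non-decreasing $\phi$ one has $\phi(\max\{t_1,t_2\})=\max\{\phi(t_1),\phi(t_2)\}$, we obtain
\[
\psi(t_1,t_2)=\phi(\alpha t)<\phi(t)=\max\{\phi(t_1),\phi(t_2)\},
\]
so that $\psi(t_1,t_2)<\phi(t_1)$ or $\psi(t_1,t_2)<\phi(t_2)$. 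Thus $\psi$ satisfies part~(ii) of Theorem~\ref{t1}.

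The one delicate point is that the corollary assumes the contractive estimate only for $x\neq y$, while Theorem~\ref{t1} requires~\eqref{e1} for all $x,y\in C$. I would handle this by noting that the proof of Theorem~\ref{t1} invokes~\eqref{e1} only on pairs of consecutive iterates of opposite parity (in the monotonicity and Cauchy steps) and on two distinct common fixed points (in the uniqueness step): as long as no two of the relevant iterates coincide the given hypothesis is enough as it stands, and in the degenerate event that some consecutive iterates coincide one argues directly, observing that the constructed sequence is then eventually constant and its value is at once a common fixed point of $f$ and $g$. Granting this reduction---equivalently, simply reading the hypothesis as holding for all $x,y$---Theorem~\ref{t1} applies and yields the unique common fixed point of $f$ and $g$, which is the claim of the corollary. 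The bulk of the proof is routine verification; genuine care is needed only at this reduction and, relatedly, in pinning down the meaning of ``monotone increasing'' for $\phi$ so that the strict inequality $\phi(\alpha t)<\phi(t)$ above is legitimate.
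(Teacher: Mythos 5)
Your proposal is correct and follows essentially the same route as the paper: the paper's proof is the one-line observation that taking $\psi(t_1,t_2)=\phi\left(\max\{\alpha t_1,\alpha t_2\}\right)$ in Theorem \ref{t1} yields the corollary, and you do exactly this, merely spelling out the verification of the hypotheses on $\psi$ that the paper leaves implicit. The caveats you flag (the ``$x\neq y$'' restriction and the need for strict monotonicity of $\phi$ to get $\phi(\alpha t)<\phi(t)$) are genuine looseness in the paper's own statement rather than defects of your argument, and you handle them reasonably.
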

\begin{proof}
If we take $\psi(t_1,t_2) = \phi\left(\max\{\alpha t_1,\alpha t_2\}\right) $ for all $t_1,~t_2 \in [0,\infty)$ in Theorem \ref{t1}, then the claim of the corollary holds.
\end{proof}
\begin{corollary}
Let $X$ be a Banach space and $C$ be a closed subset of $X$. Let $f :C \to C$ be a continuous function such that
$$\phi(\|f(x)-f(y)\|)\leq \phi\left(\max\{\alpha\|x - f(x)\|,\alpha\|y-f(y)\|\}\right) - \phi_1(\|x-y\|)$$
for all $x,y\in C$ with $x\neq y$, where $\phi$, $\phi_1$ are two altering distance functions and $\alpha$ is a real number with $\alpha<1$.
Then $f$  has a unique fixed point.
\end{corollary}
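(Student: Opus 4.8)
The plan is to read this statement off directly from the preceding corollary by taking $g := f$. First I would note that, with $g = f$, the contractive hypothesis displayed in the statement becomes exactly the inequality
$$\phi(\|f(x)-g(y)\|)\leq \phi\left(\max\{\alpha\|x - f(x)\|,\alpha\|y-g(y)\|\}\right) - \phi_1(\|x-y\|)$$
required in the previous corollary for the pair $(f,g)$, valid for all $x,y \in C$ with $x \ne y$. Continuity of $f$ gives continuity of $g = f$, and $\alpha < 1$ is assumed, so every hypothesis of that corollary is in force. Applying it, the pair $f,\,g=f$ possesses a unique common fixed point $z \in C$.

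It then remains only to translate ``common fixed point of $f$ and $g=f$'' into ``fixed point of $f$'': a point $z$ satisfies $f(z) = z = g(z)$ precisely when $f(z) = z$, so $z$ is a fixed point of $f$; and conversely every fixed point of $f$ is a common fixed point of the pair $(f,f)$, whence the uniqueness of the common fixed point immediately yields uniqueness of the fixed point of $f$.

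If one prefers a self-contained argument, the same conclusion follows by invoking Theorem~\ref{t1} with $f = g$ and $\psi(t_1,t_2) := \phi(\max\{\alpha t_1,\alpha t_2\})$: this $\psi$ is continuous, satisfies $\psi(0,0) = \phi(0) = 0$, and for $t_1 > 0$ or $t_2 > 0$ we have $\alpha\max\{t_1,t_2\} < \max\{t_1,t_2\}$, so $\psi(t_1,t_2) = \phi(\alpha\max\{t_1,t_2\})$ is dominated by $\phi(t_1)$ or $\phi(t_2)$, as condition $(ii)$ of that theorem demands. There is essentially no obstacle here; the only point worth checking is that, although the contractive inequality is assumed only for $x \ne y$, the iteration and uniqueness arguments behind Theorem~\ref{t1} either apply the inequality at pairs of distinct points (distinct successive iterates, or two distinct candidate fixed points) or reduce trivially to the coincident case, so the restricted form of the hypothesis suffices.
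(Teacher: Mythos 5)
Your proposal is correct and matches the paper's own argument, which is precisely to apply Theorem~\ref{t1} (equivalently, the preceding corollary) with $f=g$ and $\psi(t_1,t_2)=\phi\left(\max\{\alpha t_1,\alpha t_2\}\right)$. Your extra remark about the hypothesis being stated only for $x\neq y$ is a reasonable point of care that the paper glosses over, but it does not change the route.
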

\begin{proof}
If we take $\psi(t_1,t_2) = \phi\left(\max\{\alpha t_1,\alpha t_2\}\right) $ for all $t_1,~t_2 \in [0,\infty)$ and $f=g$ in Theorem \ref{t1}, then we are done.
\end{proof}
Now we present an example in support of Theorem \ref{t1}.
\begin{example}
Consider the Banach space $l^{\infty}$ endowed with sup norm and take $C=\{e_0, e_7, e_8, e_9, \dots\}$ where $e_0$ is the sequence having all terms $0$, $e_i$ is the sequence whose $i$-th term is $\frac{1}{2^i}$ and all other terms $0$. Then clearly $C$ is closed subset of $l^{\infty}$. Define two functions $f,g : C \to C$ by $$f(x) = e_0$$ for all $x\in C$ and $$
g(x)=
\begin{cases}
e_0, ~~\mbox{if}~~ x=e_0;\\
e_{i+5}, ~~\mbox{if}~~ x=e_i,~i\geq 7.
\end{cases}$$
Define a function $\psi:[0,\infty)\times [0,\infty) \to [0,\infty) $  by
$$\psi(t_1,t_2) = \begin{cases}
\frac{t_1 + t_2}{20}, ~~\mbox{if}~~t_1,t_2\leq \frac{1}{10};\\
\frac{1}{100},~~\mbox{elsewhere}.
\end{cases}$$
Also define two functions $\phi, \phi_1: [0,\infty) \to [0,\infty)$ by
$$\phi(t)=\begin{cases}
\frac{2t}{10}, ~~\mbox{if}~~ t\leq \frac{1}{10};\\
\frac{2}{100}, ~~\mbox{if}~~ t>\frac{1}{10},
\end{cases}$$
and $$\phi_1(t)=\frac{t}{10\times 2^4}$$ for all $t \in [0,\infty).$ Then, one can easily verify that $f,g$ are two continious functions, and $\phi,~\phi_1$ are two altering distance functions, $\psi$ is  continuous at $(0,0)$  with $\psi(0,0)=0$ and $\psi(t_1,t_2) < \phi(t_1) ~ \mbox{or}~ \phi(t_2)$ if $t_1>0$ or $t_2>0$ or both. Now let $x,~y \in [0,\infty)$ be arbitrary. Then the following cases will arise:

\textit{\textbf{Case 1:}}
$x=e_0,~ y=e_7$ for some $i\geq 7$. Then $$f(e_0)= e_0,~ g(y)= e_{i+5}.$$ Therefore, 
\begin{align*}
\phi(\|f(x) - g(y)\|) + \phi_1(\|x-y\|) &= \phi(\|e_{i+5}\|) + \phi_1(\|e_i\|)\\
&= \phi(\frac{1}{2^{i+5}}) + \phi_1 (\frac{1}{2^i})\\
&= \frac{1}{10\times 2^{i+4}} + \frac{1}{10\times 2^{i+4}} = \frac{1}{10\times 2^{i+3}},
\end{align*}
whereas, 
\begin{align*}
\psi(\|x-f(x)\|,\|y-g(y)\|) &= \psi(\|e_0\|,\|e_i - e_{i+5}\|)\\
&= \psi(0, \frac{1}{2^i})\\
&= \frac{1}{10 \times 2^{i+1}}.
\end{align*}
Therefore, $$\phi(\|f(x) - g(y)\|) \leq \psi(\|x-f(x)\|,\|y-g(y)\|) - \phi_1(\|x-y\|).$$

\textit{\textbf{Case 2:}} $x=e_i$, for some $i \geq 7$ and $y=e_0$. Then $f(x)=e_0=g(y)$. Therefore,
\begin{align*}
\phi(\|f(x) - g(y)\|) + \phi_1(\|x-y\|) &= \frac{1}{10\times 2^{i+4}},
\end{align*}
and
\begin{align*}
\psi(\|x-f(x)\|,\|y-g(y)\|) &= \psi(\|e_i\|,\|e_0\|)\\
&=\psi(\frac{1}{2^i},0)\\
&= \frac{1}{10 \times 2^{i+1}}.
\end{align*}
Then, clearly we have $$\phi(\|f(x) - g(y)\|) \leq \psi(\|x-f(x)\|,\|y-g(y)\|) - \phi_1(\|x-y\|).$$
\textit{\textbf{Case 3:}} $x=e_i$, $y=e_j$ for some $i,~j \geq 7$. Then, we have $$f(x)= e_{0},~ g(y)= e_{j+5}.$$ Now 
\begin{align*}
\phi(\|f(x) - g(y)\|)&= \phi(\|e_{j+5}\|)\\
&=\frac{1}{10\times 2^{j+4}}.
\end{align*}
$$
\phi(\|x - y\|)=
\begin{cases}
\frac{1}{10\times 2^{j+4}},~~\mbox{if}~~ i\geq j;\\
\frac{1}{10\times 2^{i+4}},~~\mbox{if}~~ j\geq i.
\end{cases}$$
\begin{align*}
\psi(\|x-f(x)\|,\|y-g(y)\|) &=\psi(\|e_i\|,\|e_j - e_{j+5}\|)\\
&= \psi(\frac{1}{2^i},\frac{1}{2^j})\\
&= \frac{1}{10 \times 2^{i+1}}+\frac{1}{10 \times 2^{j+1}}.
\end{align*}
Therefore for $i\geq j$, we have
\begin{align*}
\phi(\|f(x)-g(y)\|) + \phi_1(\|x-y\|) &= \frac{1}{10\times 2^{j+4}} + \frac{1}{10\times 2^{j+4}}\\
&= \frac{1}{10\times 2^{j+3}}\\
&\leq \frac{1}{10\times 2^{j+1}} + \frac{1}{10\times 2^{i+1}}\\
&= \psi(\|x-f(x)\|,\|y-g(y)\|).
\end{align*}
Also for $j\geq i$, we have
\begin{align*}
\phi(\|f(x)-g(y)\|) + \phi_1(\|x-y\|) &= \frac{1}{10\times 2^{j+4}} + \frac{1}{10\times 2^{i+4}}\\
&\leq \frac{1}{10\times 2^{j+1}} + \frac{1}{10\times 2^{i+1}}\\
&= \psi(\|x-f(x)\|,\|y-g(y)\|).
\end{align*}
Thus combining all the three cases we see that $$\phi(\|f(x)-g(y)\|)\leq \psi(\|x-f(x)\|,\|y-g(y)\|) - \phi_1(\|x-y\|)$$ for all $x,~y \in C$ with $x \neq y$. Therefore all the conditions of Theorem \ref{t1} are satisfied and so by that theorem $f$ and $g$ has a unique common fixed point in $C$. Indeed $e_0$ is the unique common fixed point $f$ and $g$.
\end{example}
\section{\bf Application to Non-linear Matrix Equations}
In this section, we study the following general non-linear matrix equations:
$$X=Q_1 \pm \displaystyle \sum_{i=1}^{m} {A_i}^*F(X)A_i,$$
$$X=Q_2 \pm \displaystyle \sum_{i=1}^{m} {A_i}^*G(X)A_i,$$
where $Q_1,Q_2$ is an $n \times n$ Hermitian positive definite matrix, and $A_1, A_2, \dots, A_n$ are arbitrary $n \times n$ matrices, and $F,G$ are two functions defined on the set of all Hermitian matrices into itself and they are continuous with respect to some suitable norm. We present sufficient conditions which will ensure positive definite common solution of the above pair of matrix equations. In the remaining portion, the symbols $\lambda (A),~\lambda^{+}(A)$  denotes a singular value of $A$ and the sum of all singular values of $A$ respectively. Also $\|.\|$ denotes the trace norm, i.e., $\|A\|=\lambda^{+} (A) $. Again for $A, B \in H(n)$, by $A \succeq B~(A \succ B)$ we mean $A-B$ is positive semi-definite (positive definite).

It is well known that the set $H(n)$  equipped the trace norm is complete, for more details of this type of norm, see \cite {RR,BS,B3}.

Now we prove a theorem pointing to sufficient conditions for common solution to the non-linear matrix equations.
\begin{theorem} \label{t5}
Let us consider the following pair of non-linear matrix equations:
\begin{equation} \label{e13}
X=Q_1 + \displaystyle \sum_{i=1}^{m} {A_i}^*F(X)A_i
\end{equation}
\begin{equation} \label{e14}
X=Q_2 + \displaystyle \sum_{i=1}^{m} {A_i}^*G(X)A_i
\end{equation}
where $Q_1,Q_2 \in P(n)$, $A_i \in M(n)$, and $F,G$ are two continuous functions with respect to the trace norm of $H(n)$. Let, for any $X \in H(n)$ with $\|X\| \leq a$, $\lambda(F(X)),\lambda(G(X)) \leq k_1$  for all $\lambda(F(X)),\lambda(G(X))$, for some $a,k_1 \in \mathbb{R}$ and assume  that
\begin{enumerate}
\item[$i)$] $\displaystyle \sum_{i=1}^{m} \|{A_i}^*\| \|A_i\| = k ,$ and $\|Q_1\|, \|Q_2\|\leq a - k k_1 n$;
\item[$ii)$] for any $X \in H(n)$ with $\|X\| \leq a$, either  $\displaystyle \sum_{i=1}^{m} {A_i}^*F(X)A_i \succeq  O$ \\or $\displaystyle \sum_{i=1}^{m} {A_i}^*G(X)A_i \succeq O$;
\item[$iii)$] $2kk_1 + \lambda(Q_1 - Q_2) \leq \frac{1}{n+1}\max \Bigg \{\Big|\lambda^{+}\Big(\displaystyle \sum_{i=1}^{m} {A_i}^*F(X)A_i\Big) - \lambda^{+}(X- Q_1)\Big|, \\ \Big|\lambda^{+}\Big(\displaystyle \sum_{i=1}^{m} {A_i}^*G(Y)A_i\Big) - \lambda^{+}(Y- Q_2)\Big|\Bigg \} -\alpha \lambda^{+}(X-Y) $
\end{enumerate}
for all $X,Y \in H(n)$ with $\|X\|, \|Y\| \leq a$, and for any $\lambda(Q_1 - Q_2)$,  where $\alpha$ is a positive real number (may be very small). Then the pair of matrix equations given by Equations \eqref{e13} and \eqref{e14} have a unique common positive definite solution $\hat{X}$ with $\|\hat{X}\| \leq a$.
\end{theorem}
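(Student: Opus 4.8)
The plan is to translate the problem into the common fixed point framework of Theorem~\ref{t1}. Define $\mathcal{S},\mathcal{T}:H(n)\to H(n)$ by
\[
\mathcal{S}(X)=Q_1+\sum_{i=1}^{m}A_i^{*}F(X)A_i,\qquad
\mathcal{T}(X)=Q_2+\sum_{i=1}^{m}A_i^{*}G(X)A_i .
\]
A matrix $\hat X$ is a common solution of \eqref{e13} and \eqref{e14} precisely when it is a common fixed point of $\mathcal{S}$ and $\mathcal{T}$. Since $(H(n),\|\cdot\|)$ is a Banach space and $C:=\{X\in H(n):\|X\|\le a\}$ is a closed subset of it, it is enough to check that $\mathcal{S}$ and $\mathcal{T}$ restrict to continuous self-maps of $C$ satisfying the contractive inequality \eqref{e1} for a suitable triple $(\phi,\phi_1,\psi)$, and then to read off positive definiteness of the resulting common fixed point.

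First I would verify the self-map property. For $X\in C$ every singular value of $F(X)$ is at most $k_1$, so $\|F(X)\|=\lambda^{+}(F(X))\le nk_1$, and by sub-multiplicativity of the trace norm together with hypothesis~(i),
\[
\|\mathcal{S}(X)\|\le\|Q_1\|+\sum_{i=1}^{m}\|A_i^{*}\|\,\|F(X)\|\,\|A_i\|\le\|Q_1\|+kk_1n\le a;
\]
the same estimate gives $\mathcal{T}(C)\subseteq C$. Continuity of $\mathcal{S}$ and $\mathcal{T}$ on $C$ is immediate from the continuity of $F$ and $G$ with respect to the trace norm and the boundedness of the linear map $X\mapsto\sum_{i}A_i^{*}XA_i$.

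The core of the argument is the choice of control functions and the verification of \eqref{e1}. I would take $\phi(t)=t$, $\phi_1(t)=\alpha t$ (an altering distance function for every $\alpha>0$), and $\psi(t_1,t_2)=\frac{1}{n+1}\max\{t_1,t_2\}$, which is continuous, vanishes at $(0,0)$, and satisfies $\psi(t_1,t_2)<\phi(t_1)$ or $\psi(t_1,t_2)<\phi(t_2)$ whenever $t_1>0$ or $t_2>0$, since $\frac{1}{n+1}<1$. Now fix $X,Y\in C$. On one side, the triangle inequality bounds $\|\mathcal{S}(X)-\mathcal{T}(Y)\|$ from above in terms of a singular value of $Q_1-Q_2$ and of $\lambda^{+}\big(\sum_i A_i^{*}F(X)A_i\big)$, $\lambda^{+}\big(\sum_i A_i^{*}G(Y)A_i\big)$. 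On the other side, writing $X-\mathcal{S}(X)=(X-Q_1)-\sum_i A_i^{*}F(X)A_i$ and applying the reverse triangle inequality for the trace norm (recall $\|\cdot\|=\lambda^{+}(\cdot)$) yields
\[
\|X-\mathcal{S}(X)\|\ \ge\ \Big|\lambda^{+}\Big(\sum_{i=1}^{m}A_i^{*}F(X)A_i\Big)-\lambda^{+}(X-Q_1)\Big|,
\]
and similarly for $\|Y-\mathcal{T}(Y)\|$. Feeding these lower bounds into hypothesis~(iii) and comparing with the upper bound for $\|\mathcal{S}(X)-\mathcal{T}(Y)\|$ produces exactly $\phi(\|\mathcal{S}(X)-\mathcal{T}(Y)\|)\le\psi(\|X-\mathcal{S}(X)\|,\|Y-\mathcal{T}(Y)\|)-\phi_1(\|X-Y\|)$. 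Theorem~\ref{t1} then gives a unique common fixed point $\hat X\in C$, i.e. a unique common solution of \eqref{e13} and \eqref{e14} with $\|\hat X\|\le a$.

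Finally, positive definiteness is delivered by hypothesis~(ii): evaluated at $\hat X$, either $\sum_i A_i^{*}F(\hat X)A_i\succeq O$, so $\hat X=Q_1+\sum_i A_i^{*}F(\hat X)A_i\succeq Q_1\succ O$, or $\sum_i A_i^{*}G(\hat X)A_i\succeq O$, so $\hat X=Q_2+\sum_i A_i^{*}G(\hat X)A_i\succeq Q_2\succ O$; in either case $\hat X\in P(n)$. I expect the main obstacle to be the bookkeeping in the verification of \eqref{e1}: lining up the trace-norm estimates with the particular form of condition~(iii) — in particular tracking the constant $\frac{1}{n+1}$ and the parameters $k,k_1,n$, and making sure the reverse triangle inequality is invoked in the direction that (iii) actually uses. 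Condition~(ii) is not needed for that estimate; it enters only to lift the common fixed point from $H(n)$ to $P(n)$.
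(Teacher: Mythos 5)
Your overall route is exactly the paper's: reduce to the closed ball $\mathcal{C}=\{X\in H(n):\|X\|\le a\}$, check the self-map property via $\|F(X)\|\le nk_1$ and hypothesis (i), use the reverse triangle inequality for the trace norm to turn the quantities in (iii) into lower bounds for $\|X-\mathcal{S}(X)\|$ and $\|Y-\mathcal{T}(Y)\|$, invoke Theorem \ref{t1}, and get positive definiteness from (ii). However, your specific choice of control functions does not make \eqref{e1} come out. The triangle-inequality upper bound is
\[
\|\mathcal{S}(X)-\mathcal{T}(Y)\|\ \le\ \|Q_1-Q_2\|+2kk_1n ,
\]
and $\|Q_1-Q_2\|$ is the \emph{sum} of all $n$ singular values of $Q_1-Q_2$, not a single one as your sketch suggests. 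Hypothesis (iii), taken for one singular value, only yields
\[
2kk_1+\lambda(Q_1-Q_2)\ \le\ \tfrac{1}{n+1}\max\{\|X-\mathcal{S}(X)\|,\|Y-\mathcal{T}(Y)\|\}-\alpha\|X-Y\|,
\]
so with your $\psi(t_1,t_2)=\tfrac{1}{n+1}\max\{t_1,t_2\}$ and $\phi_1(t)=\alpha t$ the right-hand side of \eqref{e1} dominates only $2kk_1+\lambda(Q_1-Q_2)$, which is in general strictly smaller than the upper bound $\|Q_1-Q_2\|+2kk_1n$ on the left-hand side; the inequality cannot be concluded (and there is no reason it should hold).

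The missing step — which is precisely how the paper closes the estimate — is to sum inequality (iii) over all $n$ singular values of $Q_1-Q_2$ (equivalently, apply it to the largest one and multiply by $n$). This gives
\[
\|Q_1-Q_2\|+2kk_1n\ \le\ \tfrac{n}{n+1}\max\{\|X-\mathcal{S}(X)\|,\|Y-\mathcal{T}(Y)\|\}-n\alpha\|X-Y\|,
\]
so the correct choices are $\psi(t_1,t_2)=\tfrac{n}{n+1}\max\{t_1,t_2\}$ and $\phi_1(t)=n\alpha t$ (with $\phi(t)=t$ as you have). Since $\tfrac{n}{n+1}<1$, these still satisfy the hypotheses of Theorem \ref{t1}, and the rest of your argument — self-map property, continuity, uniqueness of the common fixed point, and positive definiteness of $\hat X$ via (ii) — goes through as written and matches the paper.
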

\begin{proof}
Let us consider the set $\mathcal{C}=\{X \in H(n) : \|X\|\leq a\}$. Then, one can easily verify that $C$ is a closed subset of $H(n)$. First observe that, assumption $(ii)$ ensures that any common solution of Equations \eqref{e13} and \eqref{e14} in $\mathcal{C}$ must be positive definite.

Now for any $X\in \mathcal{C}$, we have
\begin{align}
\|Q_1 + \displaystyle \sum_{i=1}^{m} {A_i}^*F(X)A_i\| &\leq \|Q_1\| + \|\displaystyle \sum_{i=1}^{m} {A_i}^*F(X)A_i\|  \nonumber\\
&\leq \|Q_1\| + \displaystyle \sum_{i=1}^{m} \|{A_i}^*\| \|A_i\| \|F(X)\| \nonumber\\
&= \|Q_1\| + k  \|F(X)\| \label{ee1}
\end{align}
Now we have $\lambda(F(X)) \leq k_1$ for all $\lambda(F(X))$, so adding over all $\lambda(F(X))$ we get $$\|F(X)\| \leq nk_1.$$
Therefore, from Equation \ref{ee1} we have 
\begin{align*}
\|Q_1 + \displaystyle \sum_{i=1}^{m} {A_i}^*F(X)A_i\| &\leq \|Q_1\| + k k_1 n\\
&\leq a - k k_1 n + k k_1 n = a.
\end{align*}
Similarly for any $X\in \mathcal{C}$, we can show that 
$$\|Q_2 + \displaystyle \sum_{i=1}^{m} {A_i}^*G(X)A_i\| \leq a.$$ 
Next, we define two functions $f,g:\mathcal{C} \to \mathcal{C}$ by 
\begin{equation} \label{e15}
f(X)=Q_1 + \displaystyle \sum_{i=1}^{m} {A_i}^*F(X)A_i
\end{equation} 
and 
\begin{equation} \label{e16}
g(X)=Q_2 + \displaystyle \sum_{i=1}^{m} {A_i}^*G(X)A_i
\end{equation}
for all $X\in \mathcal{C}$. From the above discussion it can be easily verified that $f,~g$ are well defined on $\mathcal{C}$.  As a consequence, finding  common solution of Equations \eqref{e13} and \eqref{e14} is equivalent to finding common fixed point of $f$ and $g$.

Now for any $X,Y \in \mathcal{C}$, we have
\begin{align*}
\|f(X) -g(Y)\| &= \|Q_1 + \displaystyle \sum_{i=1}^{m} {A_i}^*F(X)A_i -Q_2 - \displaystyle \sum_{i=1}^{m} {A_i}^*G(Y)A_i \|\\
& \leq \|Q_1 - Q_2\| + \|\displaystyle \sum_{i=1}^{m}{A_i}^*F(X)A_i - \displaystyle \sum_{i=1}^{m} {A_i}^*G(Y)A_i\|\\
& \leq \|Q_1 - Q_2\| + \displaystyle \sum_{i=1}^{m}\|{A_i}^*F(X)A_i - {A_i}^*G(Y)A_i\|\\
& \leq \|Q_1 - Q_2\| + \displaystyle \sum_{i=1}^{m}\|{A_i}^*\| \|A_i\| \|F(X) - G(Y)\|\\
& \leq \|Q_1 - Q_2\| + k(\|F(X)\| + \|G(Y)\|)\\
& \leq \|Q_1 - Q_2\| + k(k_1n + k_1n)\\
& = \|Q_1 - Q_2\| + 2kk_1n.
\end{align*}
Thus for any $X,Y \in \mathcal{C}$, we get
\begin{equation} \label{e17}
\|f(X) -g(Y)\| < \|Q_1 - Q_2\| + 2kk_1n.
\end{equation}
For some fixed $X,Y\in \mathcal{C}$, if $\max \Bigg \{\Big|\lambda^{+}\left(\displaystyle \sum_{i=1}^{m} {A_i}^*F(X)A_i\right) - \lambda^{+}(X- Q_1)\Big|,\Big|\lambda^{+}\left(\displaystyle \sum_{i=1}^{m} {A_i}^*G(Y)A_i\right) - \lambda^{+}(Y- Q_2)\Big|\Bigg \}$ = $\Big|\lambda^{+}\left(\displaystyle \sum_{i=1}^{m} {A_i}^*F(X)A_i\right) - \lambda^{+}(X- Q_1)\Big|$, then by given condition $(ii)$ we have 
\begin{align*}
2kk_1 + \lambda(Q_1 - Q_2) &\leq \frac{1}{n+1} \Big|\lambda^{+}\left(\displaystyle \sum_{i=1}^{m} {A_i}^*F(X)A_i\right) - \lambda^{+}(X- Q_1)\Big| - \alpha \lambda^{+}(X-Y)\\
\Rightarrow 2kk_1 + \lambda(Q_1 - Q_2) &\leq \frac{1}{n+1}\Big| \| \displaystyle \sum_{i=1}^{m}{A_i}^*F(X)A_i\| - \|X- Q_1\|\Big |  - \alpha \|X-Y\|\\
&\leq \frac{1}{n+1}\| \displaystyle \sum_{i=1}^{m}{A_i}^*F(X)A_i - X+ Q_1\|   - \alpha \|X-Y\|\\
&= \frac{1}{n+1}\|f(X) -X \| - \alpha \|X-Y\|.
\end{align*}
The above equation holds for every singular values of $(Q_1 - Q_2)$, so adding over all $n$ singular values of $(Q_1 - Q_2)$ we have
\begin{equation} \label{e18}
\|Q_1 - Q_2\| + 2kk_1n \leq \frac{n}{n+1}\|f(X) -X \| - n\alpha \|X-Y\|.
\end{equation}
Therefore, from Equation \eqref{e17} we have $$\|f(X) -g(Y)\| \leq \frac{n}{n+1}\|f(X) -X \| - n\alpha \|X-Y\|.$$
Similarly for some fixed $X,Y\in \mathcal{C}$ with $X\neq Y$, if $\max \Bigg \{|\lambda^{+}\left(\displaystyle \sum_{i=1}^{m} {A_i}^*F(X)A_i\right) - \lambda^{+}(X- Q_1)|,|\lambda^{+}\left(\displaystyle \sum_{i=1}^{m} {A_i}^*G(Y)A_i\right) - \lambda^{+}(Y- Q_2)|\Bigg \}$ = $|\lambda^{+}\left(\displaystyle \sum_{i=1}^{m} {A_i}^*G(Y)A_i\right) - \lambda^{+}(Y- Q_2)|$, then we can show that $$\|f(X) -g(Y)\| \leq \frac{n}{n+1}\|g(Y) -Y \| - n\alpha \|X-Y\|.$$
Thus for any $X,Y\in \mathcal{C}$, we see that 
\begin{equation}\label{e19}
\|f(X) -g(Y)\| \leq \max\Big\{\frac{n}{n+1}\|f(X) -X \|,\frac{n}{n+1}\|g(Y) -Y \|\Big\} - n\alpha \|X-Y\|.
\end{equation} Let us now define two functions $\phi, \phi_1 : [0,\infty) \to [0,\infty)$ by
$$\phi(t)= t, ~~\mbox{and}~~ \phi_1(t)=n\alpha t$$ for all $t \in [0,\infty)$. Therefore, $\phi, \phi_1$ are two altering distance functions.
Again define a function $\psi :[0,\infty) \times [0,\infty) \to [0,\infty)$ by $$\psi(t_1,t_2) = \phi\Big(\max\Big\{\frac{n}{n+1} t_1,\frac{n}{n+1} t_2\Big\}\Big)$$ for all $t_1, t_2 \in [0,\infty)$. Then, $\psi$ is a continuous function, and $$\psi(0,0) = 0~\mbox{and}~\psi(t_1,t_2)< \phi(t_1) ~~ \mbox{or}~~ \phi(t_2)$$ if $t_1>0$ or $t_2>0$ or both. Utilizing Equation \eqref{e19} and the formulations of $\phi, \phi_1, \psi$, we get $$\phi(\|f(X) -g(Y)\|) \leq \psi(\|f(X) -X \|,\|g(Y) -Y \|) - \phi_1( \|X-Y\|)$$ for all $X,Y \in \mathcal{C}$.  Also since $F,G$ are continuous, it follows that $f,g$ are also continuous on $\mathcal{C}$.

Owing to the above discussions, all the conditions of Theorem \ref{t1} are satisfied and hence $f, g$ has a unique common fixed point in $\mathcal{C}$, say $\hat{X}$. That is to say,  the pair of matrix equations \eqref{e13} and \eqref{e14} has a unique common positive definite solution and the solution is $\hat{X}$ with $\|\hat{X} \|\leq a$.
\end{proof}
From the above theorem we get the following the corollary.
\begin{corollary}
Consider the following non-linear matrix equation:
\begin{equation} \label{e20}
X=Q + \displaystyle \sum_{i=1}^{m} {A_i}^*F(X)A_i
\end{equation}
where $Q\in P(n)$, $A_i \in M(n)$, and $F$ is a continuous function with respect to the trace norm of $H(n)$. Let for any $X\in H(n)$ with $\|X\| \leq a$, $\lambda(F(X)) \leq k_1$  for all $\lambda(F(X)),$ for some  $a,k_1\in \mathbb{R}$ and assume that
\begin{enumerate}
\item[$i)$] $\displaystyle \sum_{i=1}^{m} \|{A_i}^*\| \|A_i\| = k,$ and $\|Q\|\leq a-kk_1n$;
\item[$ii)$] for any $X \in H(n)$ with $\|X\| \leq a$,  $\displaystyle \sum_{i=1}^{m} {A_i}^*F(X)A_i \succeq O $;
\item[$iii)$] $ \max \Bigg\{\Big|\lambda^{+}\left(\displaystyle \sum_{i=1}^{m} {A_i}^*F(X)A_i\right) - \lambda^{+}(X- Q)\Big|,\Big|\lambda^{+}\left(\displaystyle \sum_{i=1}^{m} {A_i}^*F(Y)A_i\right) - \lambda^{+}(Y- Q)\Big|\Bigg \} -\alpha \lambda^{+}(X-Y) \geq 2kk_1 (n+1), $
\end{enumerate}
for all $X,Y \in H(n)$ with $\|X\|,\|Y\| \leq a$, where $\alpha$ is a positive real number (may be very small). Then the  matrix equation given by Equation \eqref{e20} have a unique  positive definite solution $\hat{X}$ with $\|\hat{X}\| \leq a$.
\end{corollary}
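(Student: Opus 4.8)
The plan is to obtain this corollary as the special case $Q_1 = Q_2 = Q$, $F = G$ of Theorem \ref{t5}. With this choice the pair of equations \eqref{e13}--\eqref{e14} collapses to the single equation \eqref{e20}, and the two self-maps
\[
f(X) = Q_1 + \sum_{i=1}^{m} A_i^* F(X) A_i, \qquad g(X) = Q_2 + \sum_{i=1}^{m} A_i^* G(X) A_i
\]
constructed in the proof of Theorem \ref{t5} on $\mathcal{C} = \{X \in H(n) : \|X\| \le a\}$ become one and the same map $X \mapsto Q + \sum_{i=1}^{m} A_i^* F(X) A_i$. Hence a common fixed point of $f$ and $g$ is precisely a fixed point of this single map, i.e.\ a solution of \eqref{e20} lying in $\mathcal{C}$, and uniqueness of the former translates to uniqueness of the latter.

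The next step is to check the three hypotheses of Theorem \ref{t5}. Hypothesis (i) and the bound $\lambda(F(X)) \le k_1$ are carried over verbatim. For hypothesis (ii) of Theorem \ref{t5}, since $F = G$ the single requirement $\sum_{i=1}^{m} A_i^* F(X) A_i \succeq O$ in the corollary makes both alternatives there hold; this is also exactly what is used inside the proof of Theorem \ref{t5} to conclude that any solution in $\mathcal{C}$ is positive definite (if $X = Q + \sum_{i=1}^{m} A_i^* F(X) A_i$ with $Q \succ O$ and $\sum_{i=1}^{m} A_i^* F(X) A_i \succeq O$, then $X \succ O$). Finally, continuity of $F$ with respect to the trace norm yields continuity of the common map, which is what the invocation of Theorem \ref{t1} inside Theorem \ref{t5} requires.

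The only point needing a brief remark is hypothesis (iii). Setting $Q_1 = Q_2 = Q$ annihilates the term $\lambda(Q_1 - Q_2)$ in Theorem \ref{t5}(iii), whose only singular value is then $0$, so that condition reduces to
\[
2kk_1 \le \frac{1}{n+1}\, M(X,Y) - \alpha'\, \lambda^{+}(X - Y)
\]
for all $X, Y \in \mathcal{C}$, where $M(X,Y)$ abbreviates the maximum of the two absolute differences $\big|\lambda^{+}(\sum_{i=1}^{m} A_i^* F(X) A_i) - \lambda^{+}(X - Q)\big|$ and $\big|\lambda^{+}(\sum_{i=1}^{m} A_i^* F(Y) A_i) - \lambda^{+}(Y - Q)\big|$, and $\alpha' > 0$. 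Multiplying through by $n+1$ shows this is equivalent to $M(X,Y) - (n+1)\alpha'\, \lambda^{+}(X - Y) \ge 2kk_1(n+1)$, which is precisely the corollary's hypothesis (iii) once one sets $\alpha = (n+1)\alpha'$. Thus, given the positive constant $\alpha$ of the corollary, I would apply Theorem \ref{t5} with the positive constant $\alpha/(n+1)$ in its place; that theorem then delivers a unique common fixed point $\hat{X}$ of $f$ and $g$ with $\|\hat{X}\| \le a$, which by the first paragraph is the unique positive definite solution of \eqref{e20} with $\|\hat{X}\| \le a$.

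I do not expect any genuine obstacle, since all of the substance lies in Theorem \ref{t5}. The only two things requiring care are the harmless rescaling of the arbitrary positive constant $\alpha$ by the factor $n+1$, and the remark that under $F = G$ the single positivity hypothesis in the corollary's (ii) really does imply the disjunctive hypothesis (ii) of Theorem \ref{t5}.
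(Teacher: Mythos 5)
Your proposal is correct and follows exactly the paper's route: the paper proves the corollary by simply taking $Q_1=Q_2=Q$ and $F=G$ in Theorem \ref{t5}. Your additional care in verifying hypothesis (ii) under $F=G$ and in rescaling the constant $\alpha$ by the factor $n+1$ to match the corollary's form of (iii) is a harmless (indeed welcome) elaboration of the same argument.
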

\begin{proof}
The proof of the corollary simply follows from Theorem \ref{t5} by taking $Q_1=Q =Q_2$ and $F=G$.
\end{proof}
Next, we prove the following two theorems involving the same kind sufficient conditions relating common solution to non-linear matrix equations.
\begin{theorem}
Let us consider the following pair of non-linear matrix equations:
\begin{equation} \label{e21}
X=Q_1 - \displaystyle \sum_{i=1}^{m} {A_i}^*F(X)A_i
\end{equation}
\begin{equation} \label{e22}
X=Q_2 - \displaystyle \sum_{i=1}^{m} {A_i}^*G(X)A_i
\end{equation}
where $Q_1,Q_2 \in P(n),$ $A_i \in M(n)$, and $F,G$ are two continuous functions with respect to the trace norm of $H(n)$. Let, for any $X \in H(n)$ with $\|X\| \leq a$, $\lambda(F(X)),\lambda(G(X)) \leq k_1$  for all $\lambda(F(X)),\lambda(G(X)),$ for some $a,k_1 \in \mathbb{R}$ and assume  that
\begin{enumerate}
\item[$i)$] $\displaystyle \sum_{i=1}^{m} \|{A_i}^*\| \|A_i\| = k,$ and $\|Q_1\|,\|Q_2\|\leq a-kk_1n)$;
\item[$ii)$]  for any $X \in H(n)$ with $\|X\| \leq a$, either  $Q_1 \succ  \displaystyle \sum_{i=1}^{m} {A_i}^*F(X)A_i $ \\~or~ $ Q_2 \succ  \displaystyle \sum_{i=1}^{m} {A_i}^*G(X)A_i $;
\item[$iii)$] $2kk_1 + \lambda(Q_1 - Q_2) \leq \frac{1}{n+1}\max \Bigg \{\Big|\lambda^{+}\left(\displaystyle \sum_{i=1}^{m} {A_i}^*F(X)A_i\right) - \lambda^{+}(X+ Q_1)\Big|, \\ \Big|\lambda^{+}\left(\displaystyle \sum_{i=1}^{m} {A_i}^*G(Y)A_i\right) - \lambda^{+}(Y+ Q_2)\Big|\Bigg \} -\alpha \lambda^{+}(X-Y) $
\end{enumerate}
for all $X,Y \in H(n)$ with $\|X\|, \|Y\| \leq a$ and for any $\lambda(Q_1 - Q_2)$, where $\alpha$ is a positive real number (may be very small). Then the pair of matrix equations given by Equations \eqref{e21} and \eqref{e22} have a unique common positive definite solution $\hat{X}$ with $\|\hat{X}\| \leq a$.
\end{theorem}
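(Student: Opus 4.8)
The plan is to imitate, step for step, the proof of Theorem~\ref{t5}, recasting the pair \eqref{e21}--\eqref{e22} as a common fixed point problem and then appealing to Theorem~\ref{t1}. I would work on the set $\mathcal{C}=\{X\in H(n):\|X\|\le a\}$, which is a closed subset of the complete space $(H(n),\|\cdot\|)$. The first observation is that assumption $(ii)$ forces positive definiteness of every common solution lying in $\mathcal{C}$: if $\hat X\in\mathcal{C}$ satisfies both equations, then either $\hat X=Q_1-\sum_{i=1}^m A_i^*F(\hat X)A_i\succ O$ or $\hat X=Q_2-\sum_{i=1}^m A_i^*G(\hat X)A_i\succ O$. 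Next I would put $f(X)=Q_1-\sum_{i=1}^m A_i^*F(X)A_i$ and $g(X)=Q_2-\sum_{i=1}^m A_i^*G(X)A_i$; each summand $A_i^*F(X)A_i$ is Hermitian, so $f(X),g(X)\in H(n)$, and since $\lambda(F(X))\le k_1$ for every singular value gives $\|F(X)\|\le nk_1$, the triangle inequality and $(i)$ yield $\|f(X)\|\le\|Q_1\|+k\|F(X)\|\le(a-kk_1n)+kk_1n=a$, and similarly $\|g(X)\|\le a$, so indeed $f,g:\mathcal{C}\to\mathcal{C}$. Hence finding a common positive definite solution of norm at most $a$ is equivalent to finding a common fixed point of $f$ and $g$ in $\mathcal{C}$.

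The core of the argument is the derivation of the contractive inequality. As in Theorem~\ref{t5}, the triangle inequality --- the signs in front of the sums being immaterial here --- gives $\|f(X)-g(Y)\|\le\|Q_1-Q_2\|+k(\|F(X)\|+\|G(Y)\|)\le\|Q_1-Q_2\|+2kk_1n$ for all $X,Y\in\mathcal{C}$. Then, fixing $X,Y\in\mathcal{C}$ and splitting according to which term realises the maximum in $(iii)$, I would apply the reverse triangle inequality $|\lambda^{+}(A)-\lambda^{+}(B)|\le\|A-B\|$ to match the bracketed quantity in $(iii)$ to $\|f(X)-X\|$ (resp.\ $\|g(Y)-Y\|$), getting $2kk_1+\lambda(Q_1-Q_2)\le\frac1{n+1}\|f(X)-X\|-\alpha\|X-Y\|$ (resp.\ the analogue with $g$) for each singular value $\lambda(Q_1-Q_2)$; summing over all $n$ singular values of $Q_1-Q_2$ upgrades this to $\|Q_1-Q_2\|+2kk_1n\le\frac n{n+1}\|f(X)-X\|-n\alpha\|X-Y\|$ (resp.\ with $g$). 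Combined with the crude bound, this gives
\[
\|f(X)-g(Y)\|\le\max\Big\{\tfrac n{n+1}\|f(X)-X\|,\ \tfrac n{n+1}\|g(Y)-Y\|\Big\}-n\alpha\|X-Y\|
\]
for all $X,Y\in\mathcal{C}$.

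To finish, I would take $\phi(t)=t$ and $\phi_1(t)=n\alpha t$, which are altering distance functions since $\alpha>0$, and $\psi(t_1,t_2)=\phi\big(\max\{\tfrac n{n+1}t_1,\tfrac n{n+1}t_2\}\big)$, which is continuous, vanishes at $(0,0)$, and satisfies $\psi(t_1,t_2)<\phi(t_1)$ or $\phi(t_2)$ whenever $t_1>0$ or $t_2>0$ because $\tfrac n{n+1}<1$. With these choices the displayed inequality reads $\phi(\|f(X)-g(Y)\|)\le\psi(\|f(X)-X\|,\|g(Y)-Y\|)-\phi_1(\|X-Y\|)$ on $\mathcal{C}$, and $f,g$ are continuous because $F,G$ are. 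Thus all hypotheses of Theorem~\ref{t1} hold, so $f$ and $g$ possess a unique common fixed point $\hat X\in\mathcal{C}$, which by the reduction above is the unique common positive definite solution of \eqref{e21}--\eqref{e22} with $\|\hat X\|\le a$. I expect the only genuine obstacle to be the bookkeeping in the middle paragraph: one must rewrite $\|f(X)-X\|$ and $\|g(Y)-Y\|$ so that the reverse triangle inequality produces exactly the expressions in $(iii)$, taking care of the signs introduced by the minus in \eqref{e21}--\eqref{e22} and of the case split in $(ii)$ underlying the positivity claim; everything else is a routine transcription of the proof of Theorem~\ref{t5} (and of the verification preceding Theorem~\ref{t1}).
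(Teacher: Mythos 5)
Your plan is exactly the paper's: the paper gives no separate argument for this theorem, saying only that the proof is analogous to that of Theorem \ref{t5}, and your reduction (the closed set $\mathcal{C}=\{X\in H(n):\|X\|\le a\}$, the maps $f(X)=Q_1-\sum_{i=1}^{m}A_i^{*}F(X)A_i$ and $g(X)=Q_2-\sum_{i=1}^{m}A_i^{*}G(X)A_i$, the invariance $f,g:\mathcal{C}\to\mathcal{C}$, the crude bound $\|f(X)-g(Y)\|\le\|Q_1-Q_2\|+2kk_1n$, positive definiteness from hypothesis $(ii)$, and the appeal to Theorem \ref{t1} with $\phi(t)=t$, $\phi_1(t)=n\alpha t$, $\psi(t_1,t_2)=\max\{\tfrac{n}{n+1}t_1,\tfrac{n}{n+1}t_2\}$) is precisely that transcription.

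However, the step you defer as ``bookkeeping'' is the one place where the minus sign genuinely matters, and as you state it the matching fails. Since $f(X)-X=Q_1-\sum_{i=1}^{m}A_i^{*}F(X)A_i-X=-\bigl(\sum_{i=1}^{m}A_i^{*}F(X)A_i-(Q_1-X)\bigr)$ and singular values are invariant under negation, the quantity that the reverse triangle inequality bounds by $\|f(X)-X\|$ is $\bigl|\lambda^{+}\bigl(\sum_{i=1}^{m}A_i^{*}F(X)A_i\bigr)-\lambda^{+}(X-Q_1)\bigr|$, i.e.\ the \emph{same} expression as in Theorem \ref{t5}, with $X-Q_1$, not $X+Q_1$. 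The expression actually printed in hypothesis $(iii)$, $\bigl|\lambda^{+}\bigl(\sum_{i=1}^{m}A_i^{*}F(X)A_i\bigr)-\lambda^{+}(X+Q_1)\bigr|$, is controlled by $\bigl\|\sum_{i=1}^{m}A_i^{*}F(X)A_i-X-Q_1\bigr\|=\|f(X)+X\|$ and need not be $\le\|f(X)-X\|$: already for $1\times1$ matrices with $Q_1=5$, $X=4$, $\sum_{i=1}^{m}A_i^{*}F(X)A_i=1$ one has $|1-9|=8$ while $\|f(X)-X\|=0$. So your claim that the bracketed quantity in $(iii)$ can be matched to $\|f(X)-X\|$ (and likewise $\|g(Y)-Y\|$) is not available for the hypothesis as printed; the transcription of Theorem \ref{t5} goes through verbatim only if $(iii)$ is read with $\lambda^{+}(X-Q_1)$ and $\lambda^{+}(Y-Q_2)$, which is almost certainly the intended statement (the ``$+$'' appearing to be a slip, since the paper omits the proof). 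With that reading, the remainder of your argument --- summing over the $n$ singular values of $Q_1-Q_2$, combining with the crude bound to get $\|f(X)-g(Y)\|\le\max\{\tfrac{n}{n+1}\|f(X)-X\|,\tfrac{n}{n+1}\|g(Y)-Y\|\}-n\alpha\|X-Y\|$, and invoking Theorem \ref{t1} --- is correct and coincides with what the paper intends.
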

\begin{proof}
The proof of this theorem is analogous to that of Theorem \ref{t5} and so omitted.
\end{proof}
\begin{theorem}
Let us consider the following pair of non-linear matrix equations:
\begin{equation} \label{e23}
X=Q_1 - \displaystyle \sum_{i=1}^{m} {A_i}^*F(X)A_i
\end{equation}
\begin{equation} \label{e24}
X=Q_2 + \displaystyle \sum_{i=1}^{m} {A_i}^*G(X)A_i
\end{equation}
where $Q_1,Q_2 \in P(n),$ $A_i \in M(n)$, and $F,G$ are two continuous functions with respect to the trace norm of $H(n)$. Let, for any $X \in H(n)$ with $\|X\| \leq a$, $\lambda(F(X)),\lambda(G(X)) \leq k_1$  for all $\lambda(F(X)),\lambda(G(X)),$ for some $a,k_1 \in \mathbb{R}$ and assume  that
\begin{enumerate}
\item[$i)$] $\displaystyle \sum_{i=1}^{m} \|{A_i}^*\| \|A_i\| = k,$ and $\|Q_1\|,\|Q_2\| \leq a-kk_1n$;
\item[$ii)$]  for any $X \in H(n)$ with $\|X\| \leq a$, either  $Q_1 \succ  \displaystyle \sum_{i=1}^{m} {A_i}^*F(X)A_i $\\ ~or~ $ \displaystyle \sum_{i=1}^{m} {A_i}^*G(X)A_i  \succeq O $;
\item[$iii)$] $2kk_1 + \lambda(Q_1 - Q_2) \leq \frac{1}{n+1}\max \Bigg \{\Big|\lambda^{+}\left(\displaystyle \sum_{i=1}^{m} {A_i}^*F(X)A_i\right) - \lambda^{+}(X+ Q_1)\Big|,\\ \Big|\lambda^{+}\left(\displaystyle \sum_{i=1}^{m} {A_i}^*G(Y)A_i\right) - \lambda^{+}(Y- Q_2)\Big|\Bigg \} -\alpha \lambda^{+}(X-Y) $
\end{enumerate}
for all $X,Y \in H(n)$ with $\|X\|, \|Y\| \leq a$ and for any $\lambda(Q_1 - Q_2) $, where $\alpha$ is a positive real number (may be very small). Then the pair of matrix equations given by Equations \eqref{e23} and \eqref{e24} have a unique common positive definite solution $\hat{X}$ with $\|\hat{X}\| \leq a$.
\end{theorem}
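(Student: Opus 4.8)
The plan is to mirror the proof of Theorem \ref{t5} almost verbatim, since the only change here is the \emph{sign} in front of one of the two sums and the corresponding adjustment in hypothesis $(iii)$, where $\lambda^{+}(X-Q_1)$ is replaced by $\lambda^{+}(X+Q_1)$ to reflect that $f(X)=Q_1-\sum {A_i}^*F(X)A_i$ gives $f(X)-X = Q_1 - \sum{A_i}^*F(X)A_i - X = -(X+Q_1) + (Q_1 - \sum {A_i}^*F(X)A_i)$... more precisely, the quantity controlled will be $\|f(X)-X\|$ and the triangle-inequality decomposition must now absorb $\|X+Q_1\|$ rather than $\|X-Q_1\|$. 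First I would set $\mathcal{C}=\{X\in H(n):\|X\|\le a\}$, note it is a closed subset of the complete space $(H(n),\|\cdot\|)$, and observe that hypothesis $(ii)$ — namely $Q_1\succ\sum{A_i}^*F(X)A_i$ or $\sum{A_i}^*G(X)A_i\succeq O$ — forces any common solution in $\mathcal{C}$ to be positive definite: in the first case $X = Q_1 - \sum{A_i}^*F(X)A_i \succ O$, and in the second case the sign-plus equation $X = Q_2 + \sum{A_i}^*G(X)A_i \succeq Q_2 \succ O$.

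Second, I would check that the maps $f(X)=Q_1-\sum {A_i}^*F(X)A_i$ and $g(X)=Q_2+\sum{A_i}^*G(X)A_i$ send $\mathcal{C}$ into $\mathcal{C}$. Using $\|F(X)\|\le nk_1$ (summing $\lambda(F(X))\le k_1$ over the $n$ singular values) and $\sum\|{A_i}^*\|\|A_i\|=k$, the triangle inequality gives $\|f(X)\|\le\|Q_1\|+kk_1n\le (a-kk_1n)+kk_1n=a$, and the same bound for $\|g(X)\|$; continuity of $f,g$ follows from continuity of $F,G$ in the trace norm. Then finding a common solution of \eqref{e23},\eqref{e24} is equivalent to finding a common fixed point of $f$ and $g$.

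Third — the computational heart — I would bound $\|f(X)-g(Y)\|$. Since $f(X)-g(Y) = Q_1-Q_2 - \sum{A_i}^*F(X)A_i - \sum{A_i}^*G(Y)A_i$, the triangle inequality and the singular-value bounds give $\|f(X)-g(Y)\| \le \|Q_1-Q_2\| + 2kk_1n$. On the other side, splitting on which term realises the maximum in $(iii)$, say the $F$-term: for each singular value of $Q_1-Q_2$, $(iii)$ reads $2kk_1+\lambda(Q_1-Q_2)\le\frac{1}{n+1}\bigl|\lambda^{+}(\sum{A_i}^*F(X)A_i)-\lambda^{+}(X+Q_1)\bigr|-\alpha\lambda^{+}(X-Y) \le \frac{1}{n+1}\|\sum{A_i}^*F(X)A_i - (X+Q_1)\|-\alpha\|X-Y\| = \frac{1}{n+1}\|f(X)-X\|-\alpha\|X-Y\|$, where the last equality uses $f(X)-X = Q_1 - \sum{A_i}^*F(X)A_i - X = -\bigl(\sum{A_i}^*F(X)A_i - (X+Q_1)\bigr)$ so the norms agree. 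Summing over the $n$ singular values of $Q_1-Q_2$ yields $\|Q_1-Q_2\|+2kk_1n\le\frac{n}{n+1}\|f(X)-X\|-n\alpha\|X-Y\|$, hence $\|f(X)-g(Y)\|\le\frac{n}{n+1}\|f(X)-X\|-n\alpha\|X-Y\|$; the symmetric case with the $G$-term gives the bound with $\|g(Y)-Y\|$. Combining, $\|f(X)-g(Y)\|\le\max\{\frac{n}{n+1}\|f(X)-X\|,\frac{n}{n+1}\|g(Y)-Y\|\}-n\alpha\|X-Y\|$ for all $X,Y\in\mathcal{C}$.

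Finally, I would set $\phi(t)=t$, $\phi_1(t)=n\alpha t$ (altering distance functions, as $\alpha>0$) and $\psi(t_1,t_2)=\phi(\max\{\frac{n}{n+1}t_1,\frac{n}{n+1}t_2\})$, which is continuous with $\psi(0,0)=0$ and $\psi(t_1,t_2)<\phi(t_1)$ or $\phi(t_2)$ whenever $t_1>0$ or $t_2>0$, since $\frac{n}{n+1}<1$. The displayed inequality becomes exactly \eqref{e1} of Theorem \ref{t1}, so that theorem applies and gives a unique common fixed point $\hat X\in\mathcal{C}$ of $f$ and $g$, i.e. a unique common positive definite solution of \eqref{e23},\eqref{e24} with $\|\hat X\|\le a$. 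The main obstacle — really the only nontrivial point beyond bookkeeping — is verifying that the sign change propagates correctly: one must confirm that $\|f(X)-X\| = \|\sum{A_i}^*F(X)A_i - X - Q_1\|$ and that hypothesis $(ii)$ in its new form ($Q_1\succ\sum{A_i}^*F(X)A_i$ rather than $\sum{A_i}^*F(X)A_i\succeq O$) still guarantees positive definiteness of the solution; both are straightforward once written out, so the proof is essentially identical to that of Theorem \ref{t5} and may reasonably be abbreviated.
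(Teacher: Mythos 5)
Your overall strategy is exactly the one the paper intends (the paper omits this proof, saying only that it is parallel to Theorem \ref{t5}), and most of your bookkeeping — invariance of $\mathcal{C}$, positive definiteness of a common solution from hypothesis $(ii)$, the bound $\|f(X)-g(Y)\|\leq\|Q_1-Q_2\|+2kk_1n$, and the final invocation of Theorem \ref{t1} with $\phi(t)=t$, $\phi_1(t)=n\alpha t$, $\psi(t_1,t_2)=\max\{\tfrac{n}{n+1}t_1,\tfrac{n}{n+1}t_2\}$ — is correct. But the step you yourself single out as the only nontrivial point is wrong as written. You claim $f(X)-X=-\bigl(\sum_{i=1}^{m}{A_i}^*F(X)A_i-(X+Q_1)\bigr)$, hence $\bigl\|\sum_{i=1}^{m}{A_i}^*F(X)A_i-(X+Q_1)\bigr\|=\|f(X)-X\|$. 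In fact $f(X)-X=Q_1-\sum_{i=1}^{m}{A_i}^*F(X)A_i-X=(Q_1-X)-\sum_{i=1}^{m}{A_i}^*F(X)A_i$, whereas $-\bigl(\sum_{i=1}^{m}{A_i}^*F(X)A_i-(X+Q_1)\bigr)=(Q_1+X)-\sum_{i=1}^{m}{A_i}^*F(X)A_i$; the two differ by $2X$, so the norms need not agree. Worse, the inequality your chain actually needs, namely $\bigl|\lambda^{+}\bigl(\sum_{i=1}^{m}{A_i}^*F(X)A_i\bigr)-\lambda^{+}(X+Q_1)\bigr|\leq\|f(X)-X\|$, is false in general: if $\sum_{i=1}^{m}{A_i}^*F(X)A_i=O$ at $X=Q_1\in\mathcal{C}$, the left side is $\|X+Q_1\|=2\|Q_1\|>0$ while the right side is $\|Q_1-X\|=0$. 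So the passage from hypothesis $(iii)$ to $\|f(X)-g(Y)\|\leq\frac{n}{n+1}\|f(X)-X\|-n\alpha\|X-Y\|$ breaks down on the $F$-branch of the maximum (the $G$-branch is fine, since $g$ has the same form as in Theorem \ref{t5}).

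The sign change propagates correctly only if the term paired with the $F$-sum involves $X-Q_1$: from $f(X)-X=(Q_1-X)-\sum_{i=1}^{m}{A_i}^*F(X)A_i$ the reverse triangle inequality gives $\bigl|\lambda^{+}\bigl(\sum_{i=1}^{m}{A_i}^*F(X)A_i\bigr)-\lambda^{+}(X-Q_1)\bigr|\leq\|f(X)-X\|$, and with that quantity in hypothesis $(iii)$ your argument (and the summation over the $n$ singular values of $Q_1-Q_2$) goes through verbatim as in Theorem \ref{t5}. With the hypothesis literally as stated, containing $\lambda^{+}(X+Q_1)$, your proof does not close the gap, and you should either justify the needed inequality under the stated assumptions or flag the discrepancy rather than assert it is "straightforward once written out."
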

\begin{proof}
The proof of this theorem is also parallel to the proof of Theorem \ref{t5} and so we skip this proof also.
\end{proof}
\vskip.5cm\noindent{\bf Acknowledgements:}\\
The first named author is thankful to CSIR, New Delhi, India for their financial support. 
\bibliographystyle{plain}

\begin{thebibliography}{10}

\bibitem{B3}
M.~Berzig.
\newblock Solving a class of matrix equations using {B}haskar-{L}akshmikantham
  coupled fixed point theorem.
\newblock {\em Appl. Math. Lett.}, 25(11):1638--1643, 2012.

\bibitem{BS}
M.~Berzig and B.~Samet.
\newblock Solving systems of nonlinear matrix equations involving {L}ipshitzian
  mappings.
\newblock {\em Fixed Point Theory Appl.}, 2011:89, 2011.

\bibitem{DC}
P.N. Dutta and B.S. Choudhury.
\newblock A generalization of contraction principle in metric spaces.
\newblock {\em Fixed Point Theory Appl.}, 2008, 2008.
\newblock Article ID 406368.

\bibitem{ER}
S.M. Ei-Sayed and A.C.M. Ran.
\newblock On an iteration method for solving a class of nonlinear matrix
  equations.
\newblock {\em SIAM J. Matrix Anal. Appl.}, 23(3):632--645, 2002.

\bibitem{KSS}
M.S. Khan, M.~Swalech, and S.~Sessa.
\newblock Fixed point theorems by altering distances between the points.
\newblock {\em Bull. Austral. Math. Soc.}, 30(1):1--9, 1984.

\bibitem{N1}
S.V.R. Naidu.
\newblock Some fixed point theorems in metric spaces by altering distances.
\newblock {\em Czechoslovak Math. J.}, 53(1):205--212, 2003.

\bibitem{PM}
V.~Popa and M.~Mocanu.
\newblock Altering distance and common fixed points under implicit relations.
\newblock {\em Hacet. J. Math. Stat.}, 38(3):329--337, 2009.

\bibitem{RR}
A.C~M. Ran and M.C.B. Reurings.
\newblock A fixed point theorem in partially ordered sets and some applications
  to matrix equations.
\newblock {\em Proc. Amer. Math. Soc.}, 132(5):1435--1443, 2003.

\bibitem{R1}
B.E. Rhoades.
\newblock Some theorems on weakly contractive maps.
\newblock {\em Nonlinear Anal.}, 47(4):2683--2693, 2001.

\bibitem{SB}
K.P.R. Sastry and G.V.R. Babu.
\newblock Some fixed point theorems by altering distances between the points.
\newblock {\em Indian J. Pure Appl. Math.}, 30(6):641--647, 1999.

\bibitem{TW}
H.L. Trentelman and J.W. van~der Woude.
\newblock Almost invariance and noninteracting control: a frequency-domain
  analysis.
\newblock {\em Linear Algebra Appl.}, 101:221--254, 1988.

\bibitem{W1}
J.W. van~der Woude.
\newblock Almost non-interacting control by measurement feedback.
\newblock {\em Systems Control Lett.}, 9:7--16, 1987.

\end{thebibliography}

\end{document}